\def\paragraph{\@startsection{paragraph}{4}%
  \z@\z@{-\fontdimen2\font}%
  {\normalfont\bfseries}}
\newtheorem{introthm}{Theorem}
\newtheorem{theorem}{Theorem}[section]
\newtheorem{lemma}[theorem]{Lemma}
\newtheorem{proposition}[theorem]{Proposition}
\theoremstyle{definition}
\newcommand{\N}{\mathbb{N}}
\newcommand{\R}{\mathbb{R}}
\begin{document}
%\tableofcontents
\pagebreak
%\bibliographystyle{alpha}

%\pagenumbering{roman}

\title{Characterization of higher rank via affine maps}

\author{David Lenze}

\address
  {Karlsruher Institut f\"ur Technologie\\ Fakult\"at f\"ur Mathematik \\
Englerstr. 2 \\
76131 Karlsruhe,
Germany}
\email{david.lenze@kit.edu}

\begin{abstract} We show that Hadamard spaces with geometric group actions admit affine maps that are not dilations, if and only if they are Riemannian symmetric spaces of higher rank, Euclidean buildings of higher rank, or split as non-trivial metric products.  \end{abstract}
\maketitle

\renewcommand{\theequation}{\arabic{section}.\arabic{equation}}
\pagenumbering{arabic}

\section{Introduction}
\subsection{Main result}

A map $f\colon X \to Y$ between metric spaces is \textit{affine} if it maps geodesics to linearly reparametrized geodesics, with a reparametrization factor that may depend on the geodesic. An affine map is called a \textit{dilation} if the reparametrization factors are independent of the geodesics. 
We call a metric space $X$ \textit{affinely rigid} if all affine maps $f\colon X \to Y$, into any metric space $Y$, are dilations.

We call a locally compact and geodesically complete CAT$(0)$ space a \textit{Hadamard space}. A \textit{geometric group action} is a
cocompact, properly discontinuous action by isometries.
In this paper, we prove:

\begin{introthm}\label{main}
    A Hadamard space admitting a geometric group action is not affinely rigid if and only if it is a Riemannian symmetric space of higher rank, a Euclidean building of higher rank or splits as a non-trivial metric product.
\end{introthm}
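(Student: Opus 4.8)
I would prove the two implications separately; the ``if'' direction is essentially a construction, while the ``only if'' direction is where the real work lies.

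\textbf{Spaces in the list are not affinely rigid.} For a non-trivial product $X = X_1\times X_2$ I would use the projection $\pi\colon X\to X_1$: a unit-speed geodesic $t\mapsto(\gamma_1(t),\gamma_2(t))$ maps to $\gamma_1$ traversed at the constant speed $|\dot\gamma_1|\in[0,1]$, where the value $0$ (giving a constant) is admitted, so $\pi$ is affine; it is not a dilation because the factor is $1$ on geodesics of $X_1\times\{\mathrm{pt}\}$ and $0$ on geodesics of $\{\mathrm{pt}\}\times X_2$. For a Riemannian symmetric space or Euclidean building $X$ of rank $r\ge2$ I would instead produce a non-trivial ``Finsler rescaling''. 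Let $\mathfrak{a}\cong\mathbb{R}^r$ be the model flat (model apartment), with Weyl group $W$ and root system $\Phi$, and put on $\mathfrak{a}$ the $W$-invariant norm $\|v\|' := \max_{\alpha\in\Phi}|\langle\alpha,v\rangle|$, which is not proportional to the Euclidean norm since $r\ge2$. Carrying $\|\cdot\|'$ over to every flat (apartment) of $X$ yields a $G$-invariant Finsler metric, respectively a re-metrized ``Finsler building'' metric $d'$, and the point to verify is that every $d$-geodesic is still a $d'$-geodesic: for symmetric spaces this rests on a $G$-invariant Finsler metric retaining all the Riemannian geodesics, and for buildings it follows from any two points lying in a common apartment together with the triangle inequality for the Weyl-chamber-valued distance. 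Then $\mathrm{id}\colon(X,d)\to(X,d')$ is affine with reparametrization factor $\|v\|'$ along a geodesic of type $v\in\mathfrak{a}$, and this factor is non-constant because $\|\cdot\|'$ is not proportional to the Euclidean norm and $\dim\mathfrak{a}\ge2$; hence $\mathrm{id}$ is not a dilation.

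\textbf{Only if.} Suppose $f\colon X\to Y$ is affine but not a dilation, and write $\lambda(c)\ge0$ for the reparametrization factor of $f$ along a unit-speed complete geodesic $c$. I would first record that $\lambda$ is defined on the space of such geodesics, is invariant under the geodesic flow and the flip, and is continuous (affine maps being continuous). As $f$ is not a dilation, $\lambda$ is non-constant; and since $\lambda$ being constant on the geodesics through every point would make it globally constant (connect two points by a geodesic), there is a point $p$ and geodesics $c_1,c_2$ through $p$ with $\lambda(c_1)\ne\lambda(c_2)$. Next I would prove a flat-strip lemma: \emph{parallel geodesics have the same factor.} On a flat strip $\mathbb{R}\times[0,a]$ the centre of a long rectangle is simultaneously the midpoint of each diagonal and of the vertical median; comparing the lengths of the $f$-images of these three segments as the rectangle is elongated pinches the factors of the two bounding lines together.

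The core step would then be to deduce from the non-constancy of $\lambda$ that $X$ has no rank-one geodesic, i.e. that every geodesic of $X$ bounds a flat half-plane. Granting this, $X$ has higher rank in the sense of Ballmann, and the Higher Rank Rigidity Theorem for Hadamard spaces with a geometric group action forces $X$ either to split as a non-trivial metric product or to be a Riemannian symmetric space or Euclidean building of rank $\ge2$ --- precisely the desired conclusion. To carry out the core step I would argue infinitesimally at $p$: tracking, via the first-variation formula in $X$, the $f$-images of the short geodesics joining points near $p$, one should see that wherever $\lambda$ fails to be locally constant around a geodesic $c$ through $p$ there must be a flat strip along $c$, and then use the cocompact action to spread this and conclude that no geodesic of $X$ is rank one. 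I expect this core step to be the main obstacle: the image $f(X)$ with its induced metric need not be CAT$(0)$ --- it is a Finsler-type quotient of $X$ --- so comparison in the target is unavailable, and one must instead manufacture honest flat strips in $X$ out of the mere variation of $\lambda$, uniformly over the whole space. The flat-strip lemma and the reduction to Higher Rank Rigidity are, by comparison, routine.
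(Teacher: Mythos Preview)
Your ``if'' direction is essentially the paper's: projections for products, and a $W$-invariant non-Euclidean norm on the model flat/apartment to build a Finsler rescaling for symmetric spaces and buildings. This is fine.

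The ``only if'' direction, however, has a genuine gap at the very last step. You reduce to showing that $X$ has no rank-one geodesic and then invoke ``the Higher Rank Rigidity Theorem for Hadamard spaces with a geometric group action''. That theorem does not exist: Ballmann's Higher Rank Rigidity is proved for Riemannian manifolds (Ballmann; Burns--Spatzier) and in various special settings, but for general locally compact geodesically complete CAT$(0)$ spaces it is still a \emph{conjecture}; indeed the paper explicitly frames its main theorem as an equivalent reformulation of that conjecture. So even if your core step succeeded and you produced a flat half-plane along every geodesic, you would be stuck. The paper bypasses this by appealing not to rank rigidity but to Stadler's recent theorem: a Hadamard space with a geometric group action whose Tits boundary contains a non-trivial closed \emph{symmetric} subset is already a symmetric space, a building, or a product. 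The argument then runs entirely at infinity: the reparametrization factor descends to a continuous function $\varphi$ on $\partial X$, constant on visually antipodal pairs (your flat-strip observation, pushed to geodesic lines), hence constant on the closure of the minimal visually symmetric set through any $\xi$; if $f$ is not a dilation, $\varphi$ is non-constant, so one such closure is proper, and Stadler's theorem finishes.

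A second, more technical, gap: you write that $\lambda$ ``is continuous (affine maps being continuous)'' as if this were automatic. For affine maps from a Hadamard space into an \emph{arbitrary} metric space $Y$ this is not obvious; there is no a priori local bound on the reparametrization factors. The paper spends an entire section on this, first reducing global Lipschitz continuity to continuity at a single point, and then exploiting the Lytchak--Nagano structure theory (existence of regular points where the tangent cone is Euclidean and a bi-Lipschitz chart exists) together with a simplicial ``cage'' construction to trap every ray from that point. Without this, the boundary function $\varphi$ need not even be well-defined, let alone continuous, and your flat-strip/parallel-geodesic argument (which uses a Lipschitz bound on $f$ to compare image lengths) does not get off the ground.
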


This result is situated within the context of \textbf{Ballmann's Higher Rank Rigidity Conjecture}, which asserts that a Hadamard space of rank at least two and admitting a geometric group action, is isometric to a Riemannian symmetric space, a Euclidean building, or a non-trivial metric product. Here a CAT$(0)$ space is said to have \textit{rank at least~$n$} if every geodesic is contained within an $n$-dimensional flat. 
As a consequence of Theorem~\ref{main}, Ballmann's conjecture can equivalently be stated as follows: \textit{any Hadamard space of rank at least two with a geometric group action admits an affine map which is not a dilation.}

We refer to Section 6 at the end of the paper for a brief discussion, exploring the necessity of the various assumptions in Theorem~\ref{main}.

Affine maps on Hadamard spaces $X$ with a geometric group action were investigated by Bennett, Mooney, and Spatzier in \cite{MR3451453}. Recall that by the generalized de Rham decomposition theorem for metric spaces \cite{MR2399098} (see also Caprace and Monod \cite{MR2574740}), $X$ uniquely decomposes as $X_1 \times \cdots \times X_n \times \mathbb{E}^d$, where each $X_i$ is irreducible and not isometric to the real line or a point. In this context, Bennett, Mooney, and Spatzier proved that any affine map $f\colon X \to Y$, where $Y$ is a CAT$(0)$ space, decomposes as a product of dilations on the $X_i$ factors and a standard affine map on the Euclidean factor $\mathbb{E}^d$. This result is analogous to Vilms' classical theorem \cite{MR262984} for affine maps between complete Riemannian manifolds.

Building on prior work of Ohta in \cite{MR1981876}, Lytchak in \cite{affineimages} completely characterized affine maps $f\colon M\to Y$ from complete Riemannian manifolds into arbitrary metric spaces $Y$. As a corollary of this characterization, every complete, simply connected Riemannian manifold admits an affine map which is not a dilation if and only if it is isometric to a Riemannian symmetric space of higher rank or a non-trivial Riemannian product. 
This result strongly resembles Theorem~\ref{main}, just as Vilms' result is a Riemannian analogue to the result of Bennett, Mooney, and Spatzier mentioned above.

The study of affine maps is motivated by their natural appearance in the investigation of various rigidity phenomena: they arise in super-rigidity, as described in the introduction in \cite{MR1981876}; in the analysis of product decompositions of metric spaces \cite{MR2399098}; and in the investigation of isometric actions on non-positively curved spaces as seen in \cite{MR1645958}. Furthermore, affine maps have recently played a crucial role in the author's work on the isometric rigidity of $L^2$-function spaces with manifold targets (cf. \cite{isom}).

Another example of this rigidity theme are the geometric constraints imposed by the existence of non-constant real-valued affine maps: Innami in \cite{MR681608} demonstrated that a complete Riemannian manifold $M$ admitting a non-constant affine map $f\colon M \to \R$ is isometric to $N\times \R$. This result was extended to geodesically complete CAT$(0)$ spaces by Alexander and Bishop in \cite{MR2224586}. Even without geodesic completeness, Lytchak and Schr\"oder  showed in \cite{MR2262730} that a CAT$(0)$ space admitting a non-constant affine map into $\R$ is still isometrically embedded into a product of another CAT$(0)$ space and a Hilbert space. Similar results for Alexandrov spaces with lower curvature bounds were obtained by Lange and Stadler in \cite{MR3803798}. Finally, in an even more general context, Lytchak and Schwer (cf. \cite{HL}) established that the existence of sufficiently many affine maps from a geodesic metric space into $\R$ implies that the space is isometric to a convex subset of a normed vector space with strictly convex norm.

\subsection{General Strategy}

Let $X$ be a Hadamard space, a subset $V \subset \partial X$ is \textit{symmetric} if for all $\xi, \xi' \in \partial X$, with $d_T(\xi, \xi') \geq \pi$, we have that $\xi \in V$ if and only if $\xi' \in V$, where $d_T$ denotes the Tits metric.

The proof of Theorem~\ref{main} relies on Stadler's recent characterization \cite{Stadler} of Riemannian symmetric spaces of higher rank, Euclidean buildings of higher rank, and non-trivial metric products via closed symmetric subsets at infinity.  

\begin{theorem}(Stadler, \cite[Theorem 1.1]{Stadler})\label{St}
	Let $X$ be a Hadamard space admitting a geometric group action. Suppose that
$\partial X$ contains a non-trivial closed symmetric subset. Then $X$ is isometric to a Riemannian symmetric space of higher rank, a Euclidean building of higher rank, or splits as a non-trivial product.
\end{theorem}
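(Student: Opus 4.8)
The plan is to prove the dichotomy by starting from a non-trivial closed symmetric subset $V \subset \partial X$, disposing of the reducible case, and then recognising the asymptotic structure that forces a symmetric space or building. By the Tits-join characterisation of metric products (a Hadamard space splits non-trivially exactly when its Tits boundary is a non-trivial spherical join), if $X$ is reducible we are immediately in the desired conclusion. So I would reduce to $X$ irreducible and aim to show it is a higher rank symmetric space or Euclidean building. Here geodesic completeness together with cocompactness of the $\Gamma$-action is used to guarantee that every $\xi \in \partial X$ admits an antipode (a point at Tits distance $\geq \pi$), so that both $V$ and its complement are ``antipodally closed'' and non-empty families of directions.

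The central device is that $V$ organises the geodesics of $X$ into two flow- and reversal-invariant families: a complete geodesic is a \emph{$V$-geodesic} if one (equivalently, by symmetry, either) of its endpoints lies in $V$, and a \emph{$V^{c}$-geodesic} otherwise. By geodesic completeness every point $p$ and every direction $v$ in the space of directions $\Sigma_p X$ determine the type of a geodesic through $p$, and since the two endpoints of a line are antipodal the induced partition of $\Sigma_p X$ into types is invariant under the antipodal involution $v \mapsto -v$. Thus each space of directions --- again a compact, geodesically complete $\mathrm{CAT}(1)$ space --- inherits a closed symmetric subset, and I would exploit this self-similarity together with the parallel-set splitting $P(\xi,\eta) \cong \mathbb{R} \times CS(\xi,\eta)$ for antipodal pairs $\xi,\eta$ to propagate structure between $X$, its flats, and its boundary.

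Next I would rule out the rank-one situation. If $X$ carried a rank-one isometry $g$, its attracting and repelling fixed points $\xi^{+},\xi^{-}$ would be antipodal with strong north--south dynamics, and cocompactness together with irreducibility makes the $\Gamma$-orbit of $\xi^{+}$ dense in $\partial X$ with pairs of antipodes mixing densely; combining this density with the closedness and antipodal symmetry of $V$ would force $V$ to be all of $\partial X$ or empty, contradicting non-triviality. Consequently $\partial X$ cannot be totally disconnected, and the families above should endow it with genuine positive-dimensional walls and apartments. The goal of this step is to upgrade the antipodal partition into the apartment-and-wall combinatorics of a spherical building, using that antipodal directions span flats and that the symmetric subset cuts out singular (wall) directions coherently across the cocompact orbit.

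Finally I would appeal to Leeb's asymptotic characterisation: a Hadamard space with geometric group action whose Tits boundary is a connected, irreducible, thick spherical building of dimension at least one is a Riemannian symmetric space or a Euclidean building of higher rank. The main obstacle is precisely the middle step --- extracting a bona fide building structure (verifying thickness and the exchange and apartment axioms, and matching singular directions to walls) from the raw datum of a single closed symmetric subset --- since this is where the conjecturally hard content of higher rank rigidity is concentrated. The symmetric subset must be leveraged, via the parallel-set splittings and the homogeneity supplied by the cocompact action, to manufacture enough flats and opposite chambers for Leeb's recognition theorem to apply.
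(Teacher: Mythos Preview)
The paper does not contain a proof of this statement: Theorem~\ref{St} is quoted verbatim from Stadler's preprint and used as a black box in the proof of Theorem~\ref{main}. There is therefore no ``paper's own proof'' to compare your proposal against; the entire purpose of the present paper is to \emph{apply} Stadler's theorem, not to reprove it.

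As for your outline on its own merits: it is a plausible high-level sketch of how such a result is actually obtained (reduce to the irreducible case, rule out rank one via north--south dynamics and minimality of the $\Gamma$-action on $\partial X$, then recognise a spherical building at infinity and invoke Leeb's asymptotic characterisation), but it is not a proof. You yourself identify the gap: the passage from ``$\partial X$ carries a non-trivial closed symmetric subset'' to ``the Tits boundary is a thick irreducible spherical building'' is exactly the deep content of Stadler's paper, and your proposal offers no mechanism for carrying it out beyond the hope that parallel-set splittings and cocompactness will ``manufacture enough flats and opposite chambers.'' In Stadler's actual argument this step occupies the bulk of the work and requires, among other things, a careful analysis of Morse quasiflats, submetries induced on cross-sections, and a delicate induction on dimension; none of this is visible in your sketch. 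Your rank-one step is also somewhat imprecise: the correct input is not merely density of an orbit but minimality of the $\Gamma$-action on $\partial X$ together with the fact that a closed symmetric $\Gamma$-invariant subset must be trivial in the presence of a rank-one element --- and one has to argue that the given symmetric set is, or can be replaced by, a $\Gamma$-invariant one.

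In short: nothing to compare here, and what you have written is a roadmap rather than a proof, with the hard step explicitly left open.
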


The idea for the proof of the forward direction of Theorem~\ref{main} is then to show that an affine map that is not a dilation induces a non-trivial closed symmetric subset of $\partial X$. 
Indeed, we first prove that an affine map $f\colon X \to Y$ induces a well-defined continuous function $\varphi\colon \partial X \to \mathbb{R}_{\geq 0}$, reflecting the reparametrization factors under $f$ of geodesic rays in $X$, and that this $\varphi$ is constant on minimal closed symmetric subsets generated by any $\xi \in \partial X$. 
On the other hand, we also observe that $\varphi$ is constant on the entire boundary if and only if $f$ is a dilation. 
Consequently, if $f$ is not a dilation, $\varphi$ is not constant, implying the existence of a non-trivial, closed symmetric subset. 
Theorem~\ref{St} is then invoked to complete the argument. 

Part of the technical difficulty behind this lies in showing that affine maps from Hadamard spaces into arbitrary metric spaces are Lipschitz continuous, a fact which underlies the above reasoning. 

This is addressed right at the start of our argument, in Section~\ref{sec_cont}: we begin in Lemma~\ref{cont}, by showing that continuity at a single point already implies global Lipschitz continuity of affine maps on Hadamard spaces.
Utilizing the local Lipschitz manifold structure of Hadamard spaces around \textit{regular points} (Lytchak and Nagano \cite{LN}), and building on ideas of Ohta \cite{MR1981876} and Lytchak \cite{affineimages} for the continuity of affine maps from Riemannian manifolds into metric spaces, we establish continuity at these regular points. By the above, this yields global Lipschitz continuity. 

For the converse direction of the proof of Theorem~\ref{main}, we draw upon ideas of Bennett and Schwer in \cite{P} and show, in Section~\ref{sec_Euc}, that Euclidean buildings of rank at least two are not affinely rigid, paralleling the Riemannian case mentioned above \cite{affineimages}. Combined with the fact that non-trivial metric products are also not affinely rigid (Lemma~\ref{products}), we obtain the desired result.

\section{Preliminaries}

We refer to \cite{BH, MR4701879, Stadler} for more details and proofs.
\subsection{Metric spaces} We begin by establishing notations and recalling a set of fundamental concepts and results. 

Euclidean $n$-space will be denoted by $\mathbb E^n$.  
Let $(X,d)$ be a metric space. Let $x\in X$ and $A\subset X$, then we define the distance between $x$ and $A$ as $d(x,A) :=\inf_{a\in A}d(x,a)$. For $r>0$, we define the \textit{open $r$-neighbourhood} around $A$ as $B(A,r) :=\{x\in X \vert d(x,A)<r\}$, and likewise define the \textit{closed $r$-neighbourhood} around $A$, denoted  $ B(A,r)$. We write $B(x,r):=B(\{x\},r)$, and $\overline B(x,r):= \overline B(\{x\},r)$. 

A \textit{geodesic} in $X$ is an isometric embedding of an interval into $X$. It is called a \textit{geodesic segment} if it is compact. A geodesic $\gamma\colon [0,\infty)\to X$ is called a \textit{geodesic ray}, and a geodesic $\gamma\colon \R \to X$ is a \textit{geodesic line}.

A \textit{geodesic triangle} $\Delta$ is the union of three geodesic segments connecting three points $x,y,z\in X$. A \textit{Euclidean comparison triangle} or simply \textit{comparison triangle} $\overline \Delta =\overline \Delta (x,y,z)$ of $\Delta$ is a triangle in $\mathbb E^2$ with vertices $\overline x, \overline y, \overline z \in \mathbb E^2$ such that $d(x,y)=|\overline x-\overline y|$, $d(x,z)=|\overline x-\overline z|$ and $d(y,z)=|\overline y-\overline z|$, and is unique up to an isometry of $\mathbb E^2$. 

The metric space $X$ is a \textit{geodesic metric space} if every pair of points in $X$ is joined by a geodesic. Finally $X$ is \textit{geodesically complete} if every geodesic segment is contained in a complete local geodesic.

\paragraph{Affine maps}
A map $f\colon X \rightarrow Y$ between metric spaces $X$ and $Y$ is called \textit{affine} if it maps geodesics to linearly reparametrized geodesics.  In other words, for any geodesic $\gamma$ in $X$, there exists a constant $\rho(\gamma)\geq 0$, called the \textit{reparametrization factor}, such that $d_Y(f(\gamma(t)),f(\gamma(t')))= \rho(\gamma) |t-t'|$ for all $t,t' \in I$. We stress that $\rho(\gamma)$ may depend on $\gamma$; if $\rho(\gamma)$ is independent of $\gamma$, then $f$ is a \textit{dilation}. We call a geodesic metric space $X$ \textit{affinely rigid} if every affine map $f\colon X\to Y$, into any metric space $Y$, is a dilation. 

By \cite[I.5.3]{BH}, the components of a geodesic in a metric product are linearly reparametrized geodesics, showing that projection maps $X\times Y \to X$ are affine. Specifically, projections from non-trivial metric products onto one of their factors serve as examples of affine maps that are not dilations, and thus we have: \begin{lemma}\label{products}
	Non-trivial metric products are not affinely rigid. 
\end{lemma}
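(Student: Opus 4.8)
The plan is to verify directly that the coordinate projections of a non-trivial metric product supply the required affine maps that are not dilations, as already announced just before the statement. Write the product as $X = X_1 \times X_2$ with neither factor a single point. Since $X$ is a geodesic metric space, projecting geodesics of $X$ to the factors shows that both $X_1$ and $X_2$ are geodesic metric spaces; as each has more than one point, each contains a non-constant geodesic segment. I will show that the projection $\pi_1 \colon X_1 \times X_2 \to X_1$ is affine but not a dilation.

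For affineness, let $\gamma$ be a geodesic in $X$. By \cite[I.5.3]{BH}, the component $\pi_1 \circ \gamma$ is a linearly reparametrized geodesic in $X_1$, where the reparametrization factor is allowed to vanish, in which case $\pi_1 \circ \gamma$ is constant. In all cases $d_{X_1}(\pi_1(\gamma(t)), \pi_1(\gamma(t'))) = \rho(\gamma)\,|t - t'|$ for a suitable $\rho(\gamma) \ge 0$, so $\pi_1$ is affine in the sense of the definition above. The only point requiring a little care is this degenerate case $\rho(\gamma) = 0$: the definition of an affine map explicitly permits a vanishing reparametrization factor, so constant images of geodesics pose no problem.

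To see that $\pi_1$ is not a dilation, it is enough to exhibit two geodesics of $X$ with distinct reparametrization factors under $\pi_1$. Choose non-constant geodesic segments $\sigma_i \colon [0,\ell_i] \to X_i$ in each factor and put $p_i := \sigma_i(0)$. Then $t \mapsto (\sigma_1(t), p_2)$ is a geodesic in $X$ whose $\pi_1$-image is $\sigma_1$, so its reparametrization factor equals $1$, whereas $t \mapsto (p_1, \sigma_2(t))$ is a geodesic in $X$ whose $\pi_1$-image is the constant $p_1$, so its reparametrization factor equals $0$. As $0 \ne 1$, the map $\pi_1$ is not a dilation, and hence $X$ is not affinely rigid.

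Every step here is essentially immediate, so there is no real obstacle; the whole content lies in unwinding the definitions and invoking the product structure of geodesics from \cite[I.5.3]{BH}.
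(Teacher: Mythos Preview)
Your argument is correct and follows exactly the approach the paper indicates just before the lemma: the projection $\pi_1\colon X_1\times X_2\to X_1$ is affine by \cite[I.5.3]{BH}, and exhibiting a ``horizontal'' and a ``vertical'' geodesic with reparametrization factors $1$ and $0$ shows it is not a dilation. You have simply spelled out the details the paper leaves implicit.
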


\paragraph{CAT(0) spaces}

A complete geodesic metric space is a CAT$(0)$ \textit{space} if, for any geodesic triangle, the distance between any two points on its sides is less than or equal to the distance between the corresponding points on the sides of its comparison triangle in $\mathbb E^2$. See \cite[II.1]{BH} for more details. Following Stadler in \cite{Stadler}, a locally compact, geodesically complete CAT$(0)$ space is a \textit{Hadamard space}.

Let $(X,d)$ be a CAT$(0)$ space. Two geodesic rays $\gamma,\gamma^\prime\colon [0,\infty) \to X$ are \textit{asymptotic} if $\sup_{t\geq 0}d(\gamma(t),\gamma^\prime(t))<\infty$. Being asymptotic is an equivalence relation and the set of equivalence classes of geodesic rays is the \textit{boundary at infinity} $\partial X$ of $X$. The equivalence class of a geodesic ray $\gamma$ is denoted $\gamma(\infty)$. For any $x\in X$ and $\xi\in \partial X$, there exists a \textit{unique} geodesic ray $\gamma_{x,\xi}\colon [0,\infty)\to X$ which issues from $x$ and with $\gamma_{x,\xi}(\infty)=\xi$ (cf. for example \cite[II. 8.2]{BH}).

The boundary at infinity $\partial X$ equipped with the \textit{cone topology} is the \textit{visual boundary} (cf. \cite[II.8]{BH}). Pick a base point $x\in X$. An explicit neighbourhood basis of the cone topology around any point $\xi \in \partial X$ can be given by neighbourhoods of the following form: Given $r>0$ and $\delta>0$, we define $$U(\xi, r, \delta):=\{ \xi^\prime \in \partial X \vert d(\gamma_{x,\xi}(r), \gamma_{x,\xi^\prime}(r))< \delta \}.$$ 
The \textit{Tits boundary} of $X$ is the boundary at infinity equipped with the \textit{Tits metric} $d_T$, the intrinsic metric associated to the \textit{angular metric}, see again \cite[II.8]{BH}. 

Following the notations and definitions in \cite{MR4701879}, two points $\xi, \xi^\prime \in \partial X$ are \textit{antipodal}, if $d_T(\xi, \xi^\prime)\geq \pi$, and \textit{visually antipodal}, if there exists a geodesic line $c \colon \R \to X$ connecting these two points at infinity, i.e. $c(-\infty)=\xi$ and $c(\infty)=\xi^\prime$.
The \textit{set of (visually) antipodal points} of a subset $V\subset \partial X$ is denoted by $\text{Ant}(V)$, $\text{Ant}_{vis}(V)$ respectively, and we define $\text{Ant}^{j+1}(V) = \text{Ant}(\text{Ant}^j(V))$ and $\text{Ant}_{vis}^{j+1}(V) = \text{Ant}_{vis}(\text{Ant}_{vis}^j(V))$ inductively. A subset $V\subset \partial X$ is \textit{(visually) symmetric} if $$\text{Ant}_{(vis)}(V)\subset V.$$ 
Finally we denote the \textit{minimal visual symmetric subset} of $V\subset \partial X$ by $\text{A}_{vis}(V):=\bigcup_{j\in \N} \text{Ant}_{vis}^{j}(V) $ and record the following facts: \begin{lemma}\label{St2}(Stadler, \cite[Corollaries 3.5 and 3.6]{MR4701879})
	Let $X$ be a Hadamard space and let $A\subset \partial X$. Then the following hold: 
	\begin{enumerate}
		\item If $A$ is visually symmetric, then the closure $\overline A$ is symmetric.
		\item $A$ is closed and symmetric if and only if it is closed and visually symmetric.  
	\end{enumerate}
\end{lemma}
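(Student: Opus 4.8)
The plan is to dispose of the cheap inclusions and then reduce everything to one semicontinuity statement about antipodality. First, if $c\colon\R\to X$ is a geodesic line from $\xi$ to $\xi'$, the two subrays of $c$ based at $x:=c(0)$ meet at angle $\pi$, so $d_T(\xi,\xi')\ge\angle_x(\xi,\xi')=\pi$; hence every visually antipodal pair is antipodal, i.e.\ $\text{Ant}_{vis}(V)\subseteq\text{Ant}(V)$ for every $V\subseteq\partial X$, and in particular a symmetric set is automatically visually symmetric, with no closedness hypothesis. This is the forward implication of~(2), and the reverse implication of~(2) is precisely the special case of~(1) in which $A$ is already closed, since then $\overline A=A$. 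So the whole burden falls on~(1): assuming $A$ visually symmetric, show $\text{Ant}(\overline A)\subseteq\overline A$.

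For~(1), I would isolate the following key lemma about Hadamard spaces: \emph{if $\xi_k\to\xi$ in the cone topology and $d_T(\xi,\xi')\ge\pi$, then there are points $\eta_k\to\xi'$ with $\eta_k$ visually antipodal to $\xi_k$ for all sufficiently large $k$.} Granting it, (1) follows at once: given $\xi'\in\text{Ant}(\overline A)$, choose $\xi\in\overline A$ with $d_T(\xi,\xi')\ge\pi$ and $\xi_k\in A$ with $\xi_k\to\xi$; the lemma produces $\eta_k\to\xi'$ visually antipodal to $\xi_k$, so $\eta_k\in\text{Ant}_{vis}(A)\subseteq A$ by hypothesis, whence $\xi'\in\overline A$.

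To prove the key lemma I would fix a base point $x$, so that $\gamma_{x,\xi_k}\to\gamma_{x,\xi}$ uniformly on compacta, and for each $k$ construct $c_k$ by a limiting procedure: approximate by genuine geodesic segments that run along $\gamma_{x,\xi_k}$ far out toward $\xi_k$ and then head toward $\xi'$ --- using the hypothesis $d_T(\xi,\xi')\ge\pi$ together with $\xi_k\to\xi$ to make these segments ``nearly straight'' --- extend them to bi-infinite local geodesics by geodesic completeness, and extract a limit via the Arzel\`a--Ascoli theorem (local compactness). With the parameters tuned correctly the limit is an honest geodesic line $c_k$ with $c_k(+\infty)=\xi_k$ and $c_k(-\infty)=:\eta_k$, and the family $\{\eta_k\}$ converges to $\xi'$; visual antipodality of $(\xi_k,\eta_k)$ is witnessed by $c_k$ itself.

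The delicate point --- where Stadler's Corollaries~3.5--3.6 do the real work rather than a one-line argument --- is making ``head toward $\xi'$'' and ``nearly straight'' precise, which is complicated by two features. Geodesic extensions in a CAT$(0)$ space are not unique, so one cannot canonically read off the far endpoint of a straightened broken geodesic; and the bound is on the \emph{intrinsic} Tits metric $d_T$, so the angular metric $\angle(\xi,\xi')$ may be strictly less than $\pi$ and the angle $\pi$ need not be realized anywhere in $X$ --- in particular $\xi$ and $\xi'$ themselves need not be visually antipodal even though every nearby $\xi_k$ admits a visual antipode close to $\xi'$. Overcoming this requires carrying out the straightening and the passage to infinity simultaneously, letting the approximation parameters and the lengths of the extended segments grow together, and exploiting the convexity of the CAT$(0)$ metric to keep the relevant endpoints under control throughout the limit. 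Verifying that these coupled limits behave as claimed is the technical heart of the argument.
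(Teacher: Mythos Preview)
The paper does not prove this lemma at all: it is recorded as a quotation of Stadler's Corollaries~3.5 and~3.6 and then used as a black box in Section~5. There is therefore no in-paper argument to compare your sketch against.

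That said, your outline is sound and tracks the shape of Stadler's own argument. The easy reductions are exactly right: $\text{Ant}_{vis}\subseteq\text{Ant}$ is immediate from the angle at the midpoint of a line, so symmetric implies visually symmetric with no closedness needed, and (2) collapses to (1). Your key lemma is also the correct isolating step, and one simplification is worth noting: in a proper CAT$(0)$ space the hypothesis $d_T(\xi,\xi')\ge\pi$ is \emph{equivalent} to $\angle(\xi,\xi')=\pi$ (by \cite[II.9.21]{BH} the two metrics agree below $\pi$), and since the angular metric is lower semicontinuous for the cone topology one gets $\angle(\xi_k,\xi')\to\pi$ directly. This removes the worry you raise about $\angle(\xi,\xi')$ possibly being strictly less than $\pi$, and it is the quantity $\angle(\xi_k,\xi')$, not $d_T$, that feeds the construction of the lines $c_k$. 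The genuinely delicate step is the one you flag at the end: the basepoints $p_k$ at which $\angle_{p_k}(\xi_k,\xi')$ is nearly $\pi$ may drift to infinity, and deducing cone-convergence $\eta_k\to\xi'$ from the local angle condition $\angle_{p_k}(\eta_k,\xi')\to 0$ requires a careful Arzel\`a--Ascoli and comparison argument using both local compactness and geodesic completeness. Your sketch does not fill this in, but you have correctly located where the work lies.
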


\subsection{Euclidean buildings}
We briefly recall Euclidean buildings, using the metric approach of Kleiner and Leeb \cite{MR1608566}, which is equivalent to Tits' original definition \cite{MR843391} (see \cite{MR1796138}). We closely follow Kramer \cite{kramer}, to which we refer for further details and proofs. Note that the below definition is a generalization of the classic notion, since we do not assume the group $W$ to be discrete. As a consequence, this definition encompasses non-locally compact Euclidean buildings.

Let $W = \mathbb{R}^{m} \rtimes W_0$ be the semidirect product of the additive group $\mathbb{R}^{m}$, and a spherical Coxeter group $W_0$ acting in its natural orthogonal representation on Euclidean space $\mathbb{E}^{m}$. We call $W$ an affine Weyl group. From the reflection hyperplanes of $W_0$ we obtain a decomposition of $\mathbb{E}^{m}$ into walls, half-spaces and Weyl chambers. The $W$-translates of these in $\mathbb{E}^{m}$ are still called walls, half-spaces, and Weyl chambers.

Let $(X,d)$ be a metric space. A \textit{chart} is an isometric embedding $\varphi\colon  \mathbb{E}^{m} \longrightarrow X$. The images of charts are called \textit{apartments}. The metric space $X$ is a \textit{Euclidean building} if there exists a collection $\mathcal A$ of charts such that the following five properties hold:

\begin{enumerate}
    \item[(A1)] For all $\varphi \in \mathcal{A}$ and $w \in W$, the composition $\varphi \circ w$ is in $\mathcal{A}$.
    \item[(A2)] The charts are $W$-compatible (i.e., for any two charts $\varphi, \psi \in \mathcal{A}$, $\varphi^{-1}(\psi(\mathbb{E}^{m}))$ is convex, and there is an element $w \in W$ such that $\psi \circ w$ and $\varphi$ agree on this convex set).
    \item[(A3)] Any two points $p, q \in X$ are contained in some apartment.
\end{enumerate}
We map walls, half spaces and Weyl chambers from $\mathbb{E}^{m}$ into $X$ using our collection of charts, the first three axioms ensure that these notions will be well-defined and independent of the choice of the chart.
\begin{enumerate}
    \item[(A4)] If $C, D \subset X$ are Weyl chambers, there is an apartment $A$ such that the intersections $A \cap C$ and $A \cap D$ contain Weyl chambers.
    \item[(A5)] For every apartment $A \subseteq X$ and every $p \in A$, there is a 1-Lipschitz retraction $h\colon  X \longrightarrow A$ with $h^{-1}(p) = \{p\}$.
\end{enumerate}
By \cite[II 10A.4]{BH}, Euclidean buildings are CAT$(0)$ spaces.  

\section{Continuity of affine maps}\label{sec_cont}

In this section, we prove that an affine map on a Hadamard space is always Lipschitz continuous. 

We begin by reducing global Lipschitz continuity to continuity at a single point in the Hadamard space:
 
 \begin{lemma}\label{cont}
    Let $f\colon  X \to Y$ be an affine map from a geodesically complete CAT$(0)$ space $X$ to any metric space $Y$. If $f$ is continuous at a single point in $X$, then $f$ is globally Lipschitz continuous.
\end{lemma}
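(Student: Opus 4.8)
Let $x_0\in X$ be a point at which $f$ is continuous. The plan is first to upgrade this to a Lipschitz estimate for $f$ measured \emph{from} $x_0$, and then to bootstrap it to a global Lipschitz bound by controlling the reparametrization factor of every geodesic line in $X$.

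\emph{Step 1 (Lipschitz from $x_0$).} Pick $\delta>0$ with $d(x,x_0)\le\delta\Rightarrow d_Y(f(x),f(x_0))\le 1$ and set $C:=1/\delta$. Given $x\in X$, take the geodesic from $x_0$ to $x$ and, using geodesic completeness together with the fact that local geodesics in $\mathrm{CAT}(0)$ spaces are geodesics (see \cite{BH}), extend it to a geodesic ray $\hat\sigma\colon[0,\infty)\to X$ from $x_0$. Since $\hat\sigma$ is a single (arc-length) geodesic, its reparametrization factor $\rho(\hat\sigma)$ is the same number on every subsegment. Reading it off on the subsegment of length $\delta$ and using $d_Y(f(x_0),f(\hat\sigma(\delta)))\le 1$ gives $\rho(\hat\sigma)\,\delta\le 1$, hence $\rho(\hat\sigma)\le C$; reading it off on the subsegment from $x_0$ to $x$ then gives $d_Y(f(x_0),f(x))=\rho(\hat\sigma)\,d(x_0,x)\le C\,d(x_0,x)$.

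\emph{Step 2 (global bound).} Let $c\colon\R\to X$ be any geodesic line — again obtained by extending a given geodesic segment via geodesic completeness — and let $\rho$ be its reparametrization factor, so $d_Y(f(c(s)),f(c(t)))=\rho\,|s-t|$ for all $s,t$. Combining the triangle inequality in $Y$ with Step 1 applied at $c(\pm R)$,
$$2R\rho=d_Y\big(f(c(-R)),f(c(R))\big)\le C\big(d(x_0,c(-R))+d(x_0,c(R))\big)\le C\big(2R+2\,d(x_0,c(0))\big),$$
whence $\rho\le C+C\,d(x_0,c(0))/R$; letting $R\to\infty$ gives $\rho\le C$. Since every geodesic segment of $X$ extends to such a line and shares its reparametrization factor, every geodesic of $X$ has reparametrization factor at most $C$, so $d_Y(f(x),f(x'))\le C\,d(x,x')$ for all $x,x'\in X$.

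The one point that needs care — and the reason geodesic completeness, and not merely the $\mathrm{CAT}(0)$ condition, is invoked — is the passage from a given geodesic segment to an arbitrarily long \emph{honest} geodesic; this is exactly what legitimizes the constancy of the reparametrization factor along the extension and lets a bound obtained on the tiny ball $\overline B(x_0,\delta)$ propagate, via the renormalization $R\to\infty$, to geodesics lying arbitrarily far from $x_0$. The remaining ingredients — the two triangle inequalities and $d(x_0,c(\pm R))\le d(x_0,c(0))+R$ — are routine.
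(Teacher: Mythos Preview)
Your proof is correct and follows essentially the same two-step approach as the paper: first bound the reparametrization factors of all rays issuing from the point of continuity, then use the triangle inequality through that point together with a length-to-infinity argument to bound the factor of an arbitrary geodesic line. The only cosmetic differences are that you extract the bound $C=1/\delta$ directly from the $\varepsilon$--$\delta$ definition of continuity whereas the paper argues the finiteness of $\sup_\xi\rho(\gamma_{x,\xi})$ by contradiction, and your Step~2 uses the symmetric endpoints $c(\pm R)$ rather than a one-sided limit; neither changes the substance of the argument.
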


\begin{proof}
Let $f$ be continuous at $x\in X$, then $\sup_{\xi\in \partial X} \rho(\gamma_{x,\xi})<\infty$, where $\rho(\gamma_{x,\xi})$ denotes the reparametrization factor under the affine map $f$ of the unique geodesic ray from $x$ to $\xi$. Indeed, otherwise there would exist a sequence $(\gamma_n)_{n\in \N}$ of geodesic rays issuing from $x$ with $\rho(\gamma_n)\geq n$. In that case $\gamma_n(\frac{1}{n})\to x$, but also $d_Y(f(x),f(\gamma_n(\frac{1}{n})))\geq 1$, a contradiction. 

Denote $\lambda:= \sup_{\xi\in \partial X} \rho(\gamma_{x,\xi})$. Since by geodesic completeness, the unique geodesic from $x$ to any $y\in X$ can be extended to a geodesic line, we obtain a local Lipschitz bound: $d_Y(f(x),f(y))\leq \lambda \cdot d_X(x,y).$

Now assume for contradiction that $f$ is not globally $\lambda$-Lipschitz continuous. Then there exist $y\in X$ and a geodesic line $\gamma\colon \R \to X$ such that $\gamma(0)=y$ and $\rho :=\rho(\gamma)> \lambda$.
 But then for any $t\geq 0$, by applying the triangle inequality twice, we obtain: \begin{align*}
	d_Y(f(y),f(\gamma(t)))&\leq d_Y(f(y),f(x))+d_Y(f(x),f(\gamma(t))) \\&\leq \lambda(d_X(y,x)+d_X(x,\gamma(t))) \\ &\leq \lambda(2d_X(y,x)+d_X(y,\gamma(t)))\\&=\lambda(2d_X(y,x)+t).
\end{align*}

Therefore, $(\rho-\lambda)\cdot t\leq 2\lambda d_X(y,x)<\infty$. The left hand side tends to infinity, while the right hand side remains bounded, as $t\to \infty$. This is a contradiction and proves that $f$ is Lipschitz continuous. 
\end{proof}

Let $v,w \in \mathbb{E}^n$, we denote by $l_{v,w}:[0,|v-w|]\to \mathbb E^n$ the line segment from $v$ to $w$ parametrized by arclength. 
The following auxiliary result establishes a uniform control on the deviation of $\lambda$-bi-Lipschitz curves from straight line segments connecting designated endpoints, given uniform bounds on the constants $\lambda$ and the distances of the endpoints of the curves to the designated endpoints of the line segments.

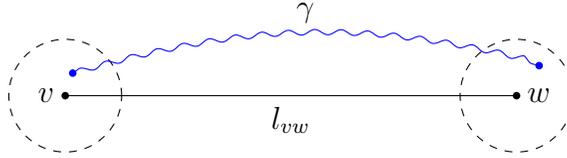
\begin{figure}[h]
 \begin{center}
  \begin{tikzpicture}
   \coordinate (A) at (0,0);
   \coordinate (B) at (6,0);

   \draw[dashed] (A) circle (0.75);
   \draw[dashed] (B) circle (0.75);
   \fill (A) circle (0.05);
   \fill (B) circle (0.05);

   \draw (A) -- (B) node[pos=0.5, below] {$l_{vw}$};
   \draw[blue, decorate, decoration={snake, amplitude=1}] (0.1,0.3) to[bend left=15] node[pos=0.5, above, black] {$\gamma$} (6.3,0.4);
   \fill[blue] (0.1,0.3) circle (0.05);
   \fill[blue] (6.3,0.4) circle (0.05);

   \node[left] at (A) {$v$};
   \node[right] at (B) {$w$};
  \end{tikzpicture}
 \end{center}
 \caption{A $\lambda(\delta)$-bi-Lipschitz curve with endpoints contained in $\eta(\delta)$-balls centred at $v$ and $w$ respectively.}
\end{figure}

\begin{lemma}\label{lem}
Suppose that for any $\delta>0$, there exist $\lambda(\delta)\geq 1$ and $\eta(\delta)>0$ with $\lambda(\delta) \to 1$ and $\eta(\delta) \to 0$ as $\delta \to 0$. Then for any $D>0$, there exists $\beta(\delta)>0$,  with $\beta(\delta)\to 0$ as $\delta \to 0$, and such that for any $v, w \in \mathbb{E}^n$, $|v-w|\leq D$, and any $\lambda(\delta)$-bi-Lipschitz curve $\gamma: [0, |v-w|] \to \mathbb{E}^n$ with $|\gamma(0) - v|, |\gamma(|v-w|) - w| < \eta(\delta)$, we have $\|\gamma - l_{v,w}\|_{\infty} \leq \beta(\delta)$.
\end{lemma}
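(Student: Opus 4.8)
The plan is to fix $v\neq w$ (the case $v=w$ is trivial, since then $\|\gamma-l_{v,w}\|_\infty=|\gamma(0)-v|<\eta(\delta)$), set $L:=|v-w|\le D$ and $e:=(w-v)/L$, and split the displacement $\gamma(t)-l_{v,w}(t)=\gamma(t)-(v+te)$ into its component along $e$ and its component orthogonal to $e$. Writing $g(t):=\langle\gamma(t)-v,e\rangle$ and $h(t):=\operatorname{dist}(\gamma(t),\mathbb{R}e+v)$ for the length of the orthogonal part, orthogonality gives $|\gamma(t)-l_{v,w}(t)|^2=(g(t)-t)^2+h(t)^2$, so it suffices to bound $|g(t)-t|$ and $h(t)$ uniformly in $t\in[0,L]$ by quantities tending to $0$ with $\delta$.

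For the parallel part, $g$ is $\lambda(\delta)$-Lipschitz (a $1$-Lipschitz image of $\gamma$), and the endpoint hypotheses give $|g(0)|<\eta(\delta)$ and $|g(L)-L|<\eta(\delta)$. Feeding these into the one-sided bounds $g(t)\le g(0)+\lambda(\delta)t$ and $g(t)\ge g(L)-\lambda(\delta)(L-t)$, and using $t,L-t\le D$, yields $|g(t)-t|<\eta(\delta)+(\lambda(\delta)-1)D=:\alpha(\delta)$, with $\alpha(\delta)\to 0$.

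For the orthogonal part I would first record the elementary quantitative fact that a nearly degenerate Euclidean triangle has its middle vertex near the opposite side: if $p,q,r\in\mathbb{E}^n$ satisfy $|p-q|+|q-r|\le|p-r|+\varepsilon$, then $\operatorname{dist}(q,\overline{pr})\le\sqrt{2\varepsilon(|p-r|+\varepsilon)}$; this follows by placing the line through $p,r$ on a coordinate axis and combining $\sqrt{x^2+y^2}-x=y^2/(\sqrt{x^2+y^2}+x)$ with $\sqrt{x^2+y^2}\le|p-q|\le|p-r|+\varepsilon$, together with a trivial check in the case where the foot of the perpendicular from $q$ falls outside $\overline{pr}$. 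Applying this with $p=\gamma(0)$, $q=\gamma(t)$, $r=\gamma(L)$: the Lipschitz bound gives $|p-q|+|q-r|\le\lambda(\delta)t+\lambda(\delta)(L-t)=\lambda(\delta)L$, while $|p-r|=|\gamma(0)-\gamma(L)|\ge|v-w|-2\eta(\delta)=L-2\eta(\delta)$, so the hypothesis holds with $\varepsilon(\delta):=(\lambda(\delta)-1)D+2\eta(\delta)\to 0$, whence $\operatorname{dist}(\gamma(t),\overline{\gamma(0)\gamma(L)})\le\sqrt{2\varepsilon(\delta)(\lambda(\delta)D+\varepsilon(\delta))}\to 0$. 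Finally, since $|(1-s)\gamma(0)+s\gamma(L)-((1-s)v+sw)|\le(1-s)|\gamma(0)-v|+s|\gamma(L)-w|<\eta(\delta)$ for all $s\in[0,1]$, the segment $\overline{\gamma(0)\gamma(L)}$ lies within distance $\eta(\delta)$ of $\overline{vw}$, and therefore $h(t)\le\operatorname{dist}(\gamma(t),\overline{vw})\le\sqrt{2\varepsilon(\delta)(\lambda(\delta)D+\varepsilon(\delta))}+\eta(\delta)=:\rho(\delta)\to 0$.

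Combining the two estimates gives $\|\gamma-l_{v,w}\|_\infty\le\sqrt{\alpha(\delta)^2+\rho(\delta)^2}=:\beta(\delta)$, which is positive, depends only on $\delta$ and $D$, and tends to $0$ as $\delta\to 0$, as required. The only ingredient that is not pure bookkeeping with the triangle inequality and orthogonal projection is the quantitative almost-degenerate-triangle estimate, and there the only mild subtlety is the case distinction according to whether the perpendicular foot lands on the segment $\overline{pr}$.
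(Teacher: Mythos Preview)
Your proof is correct. It differs substantially from the paper's argument, which is a soft compactness proof: the paper defines $\beta(\delta)$ as the supremum of $\|\gamma-l_{v,w}\|_\infty$ over all admissible curves with $|v-w|\le D$, reduces by symmetry and a monotonicity-in-length observation to the single configuration $v=0$, $w=D\cdot e$, and then argues by contradiction via Arzel\`a--Ascoli that any limit of a bad sequence would have to be the segment $l_{0,D\cdot e}$ itself.

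Your route is instead a direct quantitative estimate: you decompose $\gamma(t)-l_{v,w}(t)$ into its components parallel and orthogonal to $w-v$, control the parallel part with the Lipschitz bound and the endpoint hypotheses, and control the orthogonal part with the elementary almost-degenerate-triangle inequality $\operatorname{dist}(q,\overline{pr})\le\sqrt{2\varepsilon(|p-r|+\varepsilon)}$ applied to $p=\gamma(0)$, $q=\gamma(t)$, $r=\gamma(L)$. All the bookkeeping checks out, including the case split in the triangle estimate and the passage from $\overline{\gamma(0)\gamma(L)}$ to $\overline{vw}$. Two small remarks worth noting: first, your argument never uses the \emph{lower} bi-Lipschitz bound on $\gamma$, so you have actually proved the lemma for $\lambda(\delta)$-Lipschitz curves, a mild strengthening; second, your approach yields an explicit $\beta(\delta)$ in terms of $\lambda(\delta)$, $\eta(\delta)$, and $D$, whereas the paper's compactness argument gives no rate. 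The paper's proof is shorter and more conceptual; yours is more elementary and effective.
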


\begin{proof}
	Let $v, w \in \mathbb{E}^n$. Define $\mathcal{C}(v, w, \delta)$ as the set of $\lambda(\delta)$-bi-Lipschitz curves $\gamma: [0, |v-w|] \to \mathbb{E}^n$ satisfying $|\gamma(0) - v|, |\gamma(|v-w|) - w| < \eta(\delta)$. Let $\beta(v, w, \delta) = \sup_{\gamma \in \mathcal{C}(v, w, \delta)} \|\gamma - l_{v,w}\|_{\infty}$. 
	
	First note that for symmetry reasons, if $|v-w|=|v^\prime-w^\prime|$, then $\beta(v, w, \delta)=\beta(v^\prime, w^\prime, \delta)$. Thus, $\beta(v, w, \delta)$ depends only on $d:=|v-w|$, and we write $\beta(d,\delta)$.

Let $d^\prime \geq d$, and let $e$ be a standard unit vector. For $\gamma \in \mathcal C(0,d\cdot e,\delta)$, extend $\gamma$ to $\widetilde\gamma:[0,d^\prime] \to \mathbb E^n$ by setting $\widetilde \gamma(t)=\gamma(t)$ for $t\leq d$, and $\widetilde \gamma (t)=t\cdot e$ for $t\geq d$. Then $\widetilde \gamma \in \mathcal C(0,d^\prime \cdot e,\delta)$ and $\|\widetilde \gamma -l_{0,d^\prime \cdot e}\|_\infty=\|\gamma-l_{0,d\cdot e}\|_\infty$. Therefore, $\beta(d,\delta)\leq \beta(d^\prime,\delta)$. Thus, we set $\beta(\delta):=\beta(D,\delta)$. 

Finally, suppose $\beta(\delta)\nrightarrow 0$ as $\delta\to 0$. Then there exists $\epsilon>0$ and $\delta_k\to 0$ such that $\beta(\delta_k)>\epsilon$. Thus, there exist $\lambda(\delta_k)$-bi-Lipschitz curves $\gamma_k\in\mathcal{C}(0,D\cdot e,\delta_k)$ with $\|\gamma_k - l_{0,D\cdot e}\|_\infty\geq\epsilon$. By Arzel\`a--Ascoli, a subsequence of $(\gamma_k)_{k\in \N}$ converges to some $\gamma$, and since $\lambda(\delta_k)\to 1$ and $\eta(\delta_k)\to 0$, we deduce that $\gamma=l_{0,D\cdot e}$. This contradicts $\|\gamma - l_{0,D\cdot e}\|_\infty=\lim_{k\to \infty}\|\gamma_k - l_{0,D\cdot e}\|_\infty \geq\epsilon$, proving $\lim_{\delta \to 0}\beta(\delta)= 0$.\end{proof}

\begin{proposition}\label{affine_contin}
	Let $X$ be Hadamard space. Any affine map $f\colon X\to Y$ is Lipschitz continuous.  
\end{proposition}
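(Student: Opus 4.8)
The plan is to combine Lemma~\ref{cont} with a local boundedness statement, reducing the proposition to an analysis inside an almost-Euclidean chart. The first observation is an elementary reformulation: an affine map $f\colon X\to Y$ is continuous at a point $p\in X$ if and only if it is bounded on some ball $\overline B(p,s_0)$. Indeed, if $\operatorname{diam} f(\overline B(p,s_0))=D<\infty$, then affineness along the ray $\gamma_{p,\xi}$ gives $\rho(\gamma_{p,\xi})\,s_0=d_Y\big(f(p),f(\gamma_{p,\xi}(s_0))\big)\le D$ for every geodesic ray from $p$, hence $\rho(\gamma_{p,\xi})\le D/s_0$; applying affineness once more along the ray through an arbitrary $y\in\overline B(p,s_0)$ yields $d_Y(f(p),f(y))\le (D/s_0)\,d(p,y)$, i.e.\ continuity at $p$ (the converse is trivial). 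Combined with Lemma~\ref{cont}, it therefore suffices to exhibit a single point of $X$ near which $f$ is bounded.

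Next I would invoke the local structure theory of Lytchak--Nagano \cite{LN}: $X$ contains a regular point $p$ at which the logarithm map is an almost isometry, meaning that $\log_p$ is a homeomorphism of a ball $\overline B(p,r_0)$ onto a Euclidean ball, is radially isometric (so it carries geodesic rays from $p$ to straight radial segments), and its restriction to $\overline B(p,\delta)$ is $\lambda(\delta)$-bi-Lipschitz with $\lambda(\delta)\to 1$ as $\delta\to 0$. Fix such a $p$ and set $g:=f\circ\log_p^{-1}$, a map from a Euclidean ball into $Y$ which is a linearly reparametrized geodesic along every straight radial segment, with factor $\rho(\gamma_{p,\xi})$ in direction $\xi$. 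By convexity of balls in the CAT$(0)$ space $X$, any two points of $\overline B(p,\delta/2)$ are joined by a geodesic lying in $\overline B(p,\delta)$; pushing it forward by $\log_p$ and reparametrizing it to its chord length produces a $\lambda(\delta)^2$-bi-Lipschitz curve between the images of its endpoints. Applying Lemma~\ref{lem} (with a fixed bound $D$ on diameters in the chart, with $\lambda(\delta)^2$ in place of $\lambda(\delta)$, and with any $\eta(\delta)\to 0$), every such curve stays within $\beta(\delta)$ of the straight segment joining its endpoints, where $\beta(\delta)\to 0$ as $\delta\to 0$. In other words, $g$ sends a family of curves that $\beta(\delta)$-approximate every straight segment in $\overline B(0,\delta/2)$ to linearly reparametrized geodesics of $Y$, so that $g$ is ``affine up to an error vanishing with the scale''.

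It then remains to show that $g$ is bounded near the origin --- equivalently that $\sup_{\xi\in\partial X}\rho(\gamma_{p,\xi})<\infty$ --- which is the heart of the matter. Here I would adapt the arguments of Ohta \cite{MR1981876} and Lytchak \cite{affineimages} for affine maps out of Riemannian manifolds: arguing by contradiction, unboundedness of the radial factors forces, through the triangle inequality in $Y$, unbounded reparametrization factors for geodesics joining points of $\overline B(p,\delta)$ that are close to one another yet at comparable distance from $p$; a blow-up/compactness argument on the near-Euclidean ball --- in which honest straight segments now enter via Lemma~\ref{lem} --- then produces a contradiction, just as in the Riemannian model where the infinitesimal geometry is genuinely Euclidean and geodesics are genuinely affine. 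Once $g$ is bounded near $0$, the first paragraph gives continuity of $f$ at $p$, and Lemma~\ref{cont} upgrades this to global Lipschitz continuity.

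The main obstacle is exactly this last step. In the Riemannian setting one works with honest totally geodesic/line structure, whereas here Lemma~\ref{lem} only supplies it up to the errors encoded in $\lambda(\delta)\to 1$ and $\beta(\delta)\to 0$; one must therefore verify that these scale-dependent errors do not obstruct the estimates used to close the contradiction --- which is precisely why Lemma~\ref{lem} was formulated with both asymptotics --- and, relatedly, that the passage between $X$-geodesics and their straightened chart images is compatible with the iteration or limiting procedure taken from \cite{MR1981876,affineimages}.
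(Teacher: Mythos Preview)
Your setup matches the paper's: reduce via Lemma~\ref{cont} to continuity at a single regular point furnished by Lytchak--Nagano, and use Lemma~\ref{lem} to control how $X$-geodesics look in the almost-Euclidean chart. But the crucial step---showing that $f$ is bounded near the regular point---is only gestured at. Your proposed mechanism (``unbounded radial factors force unbounded factors on nearby non-radial geodesics, then blow up and use compactness'') is not a proof: all the reparametrization factors are a~priori unknown, the triangle inequality only relates unknowns to unknowns, and there is no clear compactness in the target $Y$ or among the factors themselves to extract a contradiction. You acknowledge this yourself as ``the main obstacle'', and nothing you wrote resolves it.

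The paper's argument is quite different and avoids any blow-up or limiting procedure. It fixes the $n+1$ vertices $v_1,\dots,v_{n+1}$ of a standard simplex, maps them into $X$ via the chart, and then \emph{inductively builds the $k$-skeleta by $X$-geodesics}: given the map on the $(k-1)$-skeleton, cone each $k$-face from its least-index vertex by geodesics. Two independent facts are then extracted. First, purely from the triangle inequality and the fact that the $0$-skeleton consists of finitely many points, $f$ is bounded on the $(n-1)$-skeleton $A$ by $2^{n-1}a$, where $a=\max_i d_Y(f(x),f(F_\delta(\rho v_i)))$; no continuity is used here. Second, Lemma~\ref{lem} is applied to the \emph{chart image} of each geodesic in this construction to show that the pulled-back $(n-1)$-skeleton is $C^0$-close to $\partial\Delta$ in $\mathbb{E}^n\setminus\{0\}$, hence homotopic to it; a homology argument then shows it must intersect every ray from the origin. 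Combining the two: every geodesic ray from $x$ meets the compact set $A$ at distance at least $b=d(x,A)>0$, where $d_Y(f(x),f(\cdot))\le 2^{n-1}a$, so every radial factor is at most $2^{n-1}a/b$. This is the missing idea in your proposal: a \emph{finite} geodesic-coning construction on which $f$ is bounded for trivial reasons, together with a degree/homology argument forcing that set to be hit by every ray.
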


The proof adapts to CAT$(0)$ spaces ideas used by Ohta and Lytchak to prove the continuity of affine maps from Riemannian manifolds to metric spaces, see \cite{MR1981876} and \cite{affineimages}.

\begin{proof}[Proof of Proposition~\ref{affine_contin}]
By Lemma~\ref{cont}, it suffices to show that $f$ is continuous at a single point. 
By Lytchak and Nagano (cf. Theorem 1.2, Section 14.2 and 14.3 in \cite{LN}), there exists a \textit{regular point} $x \in X$ such that the tangent cone $T_xX$ is isometric to $\mathbb E^n$ for some $n \in \mathbb{N}$. Moreover, for every $\delta > 0$, there exists $r(\delta) > 0$ and a $(1+\delta)$-bi-Lipschitz embedding $F_\delta\colon  B_{r(\delta)}(0) \to X$ with open image and $F_\delta(0) = x$. We show that $f$ is continuous at this regular point $x$. 

Let $v_1,...,v_{n+1}\in \mathbb E^n$ be the unit vertices of a standard $n$-simplex $\Delta=\text{Conv}(v_1, \dots, v_{n+1})\subset \mathbb E^n$ centred at the origin. For $0 \le k \le n$ the $k$-skeleton $\Delta_k$ is the union of all $k$-dimensional faces, $F_A$, where $A \subset \{1, \dots, n+1\}$, $|A| = k+1$ and $F_A := \text{Conv}(\{v_i \mid i \in A\})$. In other words,
$\Delta_k = \bigcup_{|A| = k+1} F_A.$
Observe that $\Delta_0 = \{v_1, \dots, v_{n+1}\}$, $\Delta_1$ is the union of one dimensional edges, $\Delta_{n-1}=\partial \Delta$, and $\Delta_n = \Delta$.

Take any $\delta\in (0,1)$. Since the image of $F_\delta$ is open, there exists $\rho:=\rho(\delta)>0$ such that $F_\delta(\rho v_i)$, $i=1,...,n+1$ are all contained in a ball $B(x,\rho^\prime)\subset \text{im} F_\delta$ of radius $\rho^\prime:=\rho^\prime(\delta)< 2\rho$ centred at $x$.

Let $g_{0,\delta}:\Delta_0 \to X$ be defined by $g_{0,\delta}(v_i)=F_\delta(\rho v_i)\in B(x,\rho^\prime)$ for $i=1,...,n+1$. Inductively define successive extensions $g_{k,\delta}:\Delta_k \to X$ as follows: Given $g_{k-1,\delta}$, for $A \subset \{1,...,n+1\}$, $|A|=k+1$, and $w\in F_{A\setminus \{v\}}$, let $\gamma$ be the geodesic from $g_{k-1,\delta}(v)$ to $g_{k-1,\delta}(w)$, where $v:=v_{m(A)}$ and $m(A) = \min(A)$. Then, for $t\in [0,1]$, we define $$g_{k,\delta}((1-t)v+tw) = \gamma(t d(g_{k-1,\delta}(v), g_{k-1,\delta}(w))).$$

This completely characterizes $g_{k,\delta}$ in a well-defined way. Indeed, on $\Delta_{k-1}$, it is simply $g_{k-1,\delta}$. On $\Delta_k \setminus \Delta_{k-1}$ on the other hand, take any point $u \in \Delta_k \setminus \Delta_{k-1}$. This point lies in exactly one $k$-face $F_A$, where $A \subset \{1,...,n+1\}$, and there exists a unique $w \in F_{A \setminus \{v\}}$, $v:=v_{m(A)}$, such that the line connecting $v$ to $u$ intersects the face $F_{A \setminus \{v\}}$ at $w$. Furthermore, there is a unique $t \in (0, 1)$ such that $u = (1-t)v + tw$. Thus $g_{k,\delta}:\Delta_k \to X$ is a well-defined extension of $g_{k-1,\delta}:\Delta_{k-1} \to X$.

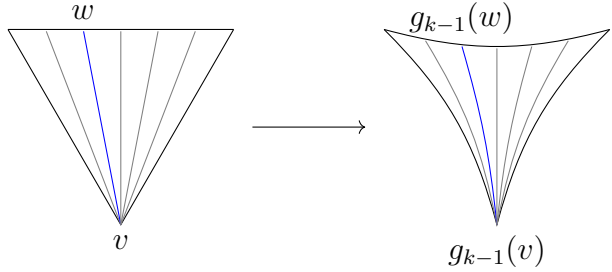
\begin{figure}
 \begin{center}
  \begin{tikzpicture}
   \coordinate (A) at (-1.5,0);
   \coordinate (B) at (1.5,0);
   \coordinate (C) at (0,-{1.5*sqrt(3)});
   \draw (B) -- (C) -- (A);
   \draw (A) --
    node[pos={1/6}, inner sep=0] (w1) {}
    node[pos={2/6}, inner sep=0] (w2) {}
    node[pos={3/6}, inner sep=0] (w3) {}
    node[pos={4/6}, inner sep=0] (w4) {}
    node[pos={5/6}, inner sep=0] (w5) {}
    (B);

   \draw[gray] (C) -- (w1);
   \draw[blue] (C) -- (w2);
   \draw[gray] (C) -- (w3);
   \draw[gray] (C) -- (w4);
   \draw[gray] (C) -- (w5);

   \node[above] at (w2) {$w$};
   \node[below] at (C) {$v$};
   %\fill (w2) circle (0.05);

   \coordinate (A') at (3.5,0);
   \coordinate (B') at (6.5,0);
   \coordinate (C') at (5,-{1.5*sqrt(3)});
   \draw (B') to[bend right=15] (C') to[bend right=15] (A');
   \draw (A') to[bend right=15]
    node[pos={1/6}, inner sep=0] (w'1) {}
    node[pos={2/6}, inner sep=0] (w'2) {}
    node[pos={3/6}, inner sep=0] (w'3) {}
    node[pos={4/6}, inner sep=0] (w'4) {}
    node[pos={5/6}, inner sep=0] (w'5) {}
    (B');

   \draw[gray] (C') to[bend right=10] (w'1);
   \draw[blue] (C') to[bend right=5] (w'2);
   \draw[gray] (C') -- (w'3);
   \draw[gray] (C') to[bend left=5] (w'4);
   \draw[gray] (C') to[bend left=10] (w'5);

   \node[above] at (w'2) {$g_{k-1}(w)$};
   \node[below] at (C') {$g_{k-1}(v)$};
   %\fill (w'2) circle (0.05);

   \draw[->] (1.75,-{0.75*sqrt(3)}) -- (3.25,-{0.75*sqrt(3)});
  \end{tikzpicture}
 \end{center}
 \caption{Inductive construction  of $g_{k}$ from $g_{k-1}$.}
\end{figure}

Note that by the convexity of balls in CAT$(0)$ spaces, $\text{im}(g_{k,\delta})\subset B(x,\rho^\prime)$. 
Next define $h_{k,\delta}:\Delta_k\to \mathbb E^n$ by $h_{k,\delta}(v):=\frac{1}{\rho}F_\delta^{-1}(g_{k,\delta}(v))$; and let $j_{k}:\Delta_k \to \mathbb E^n\setminus \{0\}$ be the standard inclusions. We show inductively that for any $\epsilon>0$, there exits $\delta >0$ such that $\|h_{k,\delta}-j_k\|_\infty<\epsilon.$ 

For the base case $k=0$, note that $h_{0,k}=j_0$, and so the claim holds. 

Now assume the claim holds for $k-1\geq 0$. Then there exists $\eta(\delta)\geq 0$ with $\eta(\delta) \to 0$ as $\delta \to 0$, and such that $\|h_{k-1,\delta}-j_{k-1}\|_\infty<\eta(\delta).$ Take a $k$-dimensional face $F_A$, $A\subset \{1,...,n+1\}$ and $|A|=k+1$, set $v:=v_{m(A)}$, and choose some $w\in F_{A\setminus \{v\}}$. We know that there exists $\mu(k)>0$ such that $\mu(k)\leq|v-w|\leq 2$.

Now we show that the line segment $l_{v,w}:[0,|v-w|]\to \mathbb E^n$ is mapped to a $\lambda(\delta)$-bi-Lipschitz curve $\widetilde \gamma$ connecting $h_{k-1,\delta}(v)=v$ to $h_{k-1,\delta}(w)$, where $\lambda(\delta)\to 0$ as $\delta\to 1$ and $\lambda(\delta)$ does not depend on the choice of $v$ and $w$, but only on $\delta$ and the dimension $k$. 

 Indeed, we have $h_{k,\delta}(l_{vw}(t))=\frac{1}{\rho}F_\delta^{-1}(\gamma(t\cdot \frac{d(g_{k-1,\delta}(v),g_{k-1,\delta}(w))}{|v-w|})):=\widetilde \gamma(t),$ where $\gamma$ is the geodesic connecting $g_{k-1,\delta}(v)$ to $g_{k-1,\delta}(w)$. By the bi-Lipschitz property of $F_\delta$, we know that $
	\frac{1}{\rho}(1+\delta)^{-1}\frac{d(g_{k-1,\delta}(v),g_{k-1,\delta}(w))}{|v-w|}|t-t^\prime| \leq  |\widetilde\gamma(t)-\widetilde\gamma(t^\prime)|\leq  \frac{1}{\rho}(1+\delta)\frac{d(g_{k-1,\delta}(v),g_{k-1,\delta}(w))}{|v-w|}|t-t^\prime|.$
At the same time, by the triangle inequality, again the bi-Lipschitz property of $F_\delta$, and the assumption that $\|h_{k-1,\delta}-j_{k-1}\|_\infty<\eta(\delta)$,  we deduce that \begin{align*}
	\frac{d(g_{k-1,\delta}(v),g_{k-1,\delta}(w))}{|v-w|} &\leq \rho (1+\delta)\frac{|h_{k-1,\delta}(v)-h_{k-1,\delta}(w)|}{|v-w|}\\&\leq \rho (1+\delta)\left(1+\frac{\eta(\delta)}{|v-w|} \right)\\&\leq \rho(1+\delta)\left(1+\frac{\eta(\delta)}{\mu(k)} \right),
\end{align*} and likewise, $\rho(1+\delta)^{-1}\left(1-\frac{\eta(\delta)}{\mu(k)} \right)\leq \frac{d(g_{k-1,\delta}(v),g_{k-1,\delta}(w))}{|v-w|}$. Thus for sufficiently small $\delta$, we have shown that $\widetilde \gamma$ is indeed a $\lambda(\delta)$-bi-Lipschitz curve, where $\lambda(\delta)=(1+\delta)^2\left(1-\frac{\eta(\delta)}{\mu(k)} \right)^{-1} \to 1$ as $\delta \to 0$. 

Therefore by Lemma~\ref{lem}, and since $|v-w|\leq 2$, there exists $\beta(\delta)>0$, independent of the choice of $A$, $v$ and $w$, such that $\beta(\delta)\to 0$ and, $$\|h_{k,\delta}(l_{vw}) -l_{vw}\|_\infty \leq \beta(\delta).$$ 

Since any point of $\Delta_k$ is contained in such a line segment $l_{vw}$, this shows that indeed $\|h_{k,\delta}-j_k\|_\infty \leq \beta(\delta)$. 

Now choose $\epsilon > 0$ small enough, so that $ B(\partial \Delta, \epsilon)$ avoids a neighbourhood of the origin. By the above, there exists $\delta > 0$ with $\|h_{{n-1},\delta}-j_{n-1}\|_\infty<\epsilon$, and so $\Gamma:=\text{im}(h_{n-1,\delta})\subset \mathbb E^n\setminus \{0\}$. Define $h:\partial \Delta=\Delta_{n-1} \to \Gamma$ by $h(v):=h_{n-1,\delta}(v)$, i.e. $h$ is simply $h_{n-1,\delta}$ with target restricted to the image. Since $\|\iota_1 \circ h-j\|_\infty<\epsilon$, $\iota_1\circ h$ is homotopic to $j:=j_{n-1}$, where $\iota_1:\Gamma \to \mathbb  E ^n\setminus \{0\}$ is the standard inclusion.

We now show that $\Gamma$, while avoiding the origin, must intersect every ray emanating from the origin.

To see this, assume, for contradiction, that there exists a ray $\nu\colon [0, \infty) \to \mathbb{E}^n$, emanating from the origin that does not intersect $\Gamma$. Then $\Gamma \subset \mathbb{E}^n \setminus \text{im}(\nu)$, and since $\mathbb{E}^n \setminus \text{im}(\nu)$ is contractible, $H_{n-1}(\mathbb{E}^n \setminus \text{im}(\nu)) = 0$. Consequently, $(\iota_2)_*\colon H_{n-1}(\Gamma) \to H_{n-1}(\mathbb{E}^n \setminus \text{im}(\nu))$, induced by the standard inclusion $\iota_2\colon \Gamma \hookrightarrow \mathbb{E}^n \setminus \text{im}(\nu)$, is the zero map. Let $\iota_3\colon \mathbb{E}^n \setminus \text{im}(\nu) \hookrightarrow \mathbb{E}^n \setminus \{0\}$ be the standard inclusion. Then $(\iota_1)_*=(\iota_2)_* \circ (\iota_3)_*$ is also the zero map.

However, $(\iota_1\circ h)$ is homotopic to $j\colon \partial \Delta \hookrightarrow \mathbb{E}^n \setminus \{0\}$, which is a homotopy equivalence. Thus $(\iota_1)_* \circ (h)_*=(j)_*$ is an isomorphism from $H_{n-1}(\partial \Delta)\cong \mathbb{Z}$ to $H_{n-1}(\mathbb E^n\setminus \{0\})\cong \mathbb{Z}$.
But also, since $(\iota_1)_*$ is the zero map, $(j)_*$ must also be the zero map, a contradiction. Thus, as claimed, $\Gamma$ intersects every ray.

Define $A_{k}:=\text{im}(g_{k,\delta})= F_\delta(\rho \cdot \text{im}(h_{k,\delta}))$, $k\leq n-1$. The above implies that $A:=A_{n-1}=F_\delta(\rho \cdot \Gamma)$ does not contain $x$, but intersects any geodesic ray $\gamma\colon [0,\infty) \to X$ which issues from $x$. We can use this fact to bound the map $f$ around $x$:
let $a:=\max\{d_Y(f(x),f(F_\delta(\rho v_i))\vert i=1,...,n+1\}$ and $b:=d(x,A)$. By induction, and by the triangle inequality, we know that $d_Y(f(x),f(x^\prime))\leq 2^k a $ for all $x^\prime \in A_{k}$. Since $A$ is compact, we also have that $b>0$. Therefore, for all $x^\prime\in A$, we know that $d_Y(f(x),f(x^\prime)) \leq \lambda d(x,x^\prime)$, where $\lambda:=\frac{2^{n-1} a}{b}$.

Since $f$ is affine and $A$ intersects any ray which issues from $x$, this implies that $d_Y(f(x),f(x^\prime))\leq \lambda d(x,x^\prime)$ for all $x^\prime \in X$, and therefore $f$ is continuous at $x$. This completes the proof. \end{proof}

\section{Affine maps on Euclidean buildings}\label{sec_Euc}

\begin{lemma}\label{buildings}
Euclidean buildings of higher rank are not affinely rigid. \end{lemma}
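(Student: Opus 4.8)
The goal is to produce, on any Euclidean building $X$ of rank at least two, an affine map that is not a dilation. The natural idea — following Bennett--Schwer \cite{P} and the Riemannian model case of Lytchak \cite{affineimages} — is to build a target space out of a single apartment together with a retraction, and to arrange the reparametrization factors so that they genuinely vary. Concretely, fix an apartment $A \cong \mathbb{E}^m$ in $X$ ($m \geq 2$, since the rank is at least two), and on $\mathbb{E}^m$ pick a linear map $L \colon \mathbb{E}^m \to \mathbb{E}^k$ that is not a similarity — for instance the orthogonal projection onto a proper, nonzero, $W_0$-generic subspace, or more safely a linear map whose singular values are not all equal and which is chosen to be compatible with the wall structure. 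The map $f$ will be $L \circ h_A$, where $h_A \colon X \to A$ is (a variant of) the retraction, precomposed with the identification $A \cong \mathbb{E}^m$. The point is that $f$ restricted to $A$ is affine but not a dilation, and one must check that the retraction does not destroy affineness on geodesics that leave $A$.

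\textbf{Key steps.} First I would recall the retraction onto an apartment based at a Weyl chamber (or a point at infinity): for a Euclidean building, given an apartment $A$ and a chamber $C$ (or a regular boundary point $\xi$), there is a retraction $r = r_{A,C} \colon X \to A$ that restricts to an isometry on every apartment containing $C$, and more importantly maps \emph{every} geodesic segment of $X$ to a \emph{piecewise-linear} path in $A$ — in fact, by the structure theory (axioms (A4), (A5) and the fact that geodesics in buildings pass through a controlled sequence of apartments), the image of a geodesic is a concatenation of finitely many line segments. This is not yet affine. The trick of Bennett--Schwer is to choose $L$ so that it is \emph{constant along the directions in which the retraction bends geodesics}: one shows that when the retraction ``folds'' a geodesic at a wall, the incoming and outgoing directions differ by an element of the Weyl group $W_0$ fixing a certain subspace, and if $L$ factors through the quotient by the span of all such reflection directions appropriately, then $L \circ r$ sends geodesics of $X$ to genuine (linearly reparametrized) geodesics of $\mathbb{E}^k$. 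Second, I would verify affineness precisely: take a geodesic $\gamma$ in $X$, write $r \circ \gamma$ as a piecewise-linear path $l_1 * l_2 * \cdots * l_p$ in $A$ with $\sum |l_i| = \mathrm{length}(\gamma)$ (the retraction is $1$-Lipschitz and isometric on each piece), and check that $L$ sends each $l_i$ to a segment in a common direction of the same speed, so that $L \circ r \circ \gamma$ is a single affinely parametrized geodesic; the reparametrization factor is then $\|L|_{(\text{direction of }\gamma)}\|$, which depends on $\gamma$ precisely because $L$ is not a similarity, hence $f$ is not a dilation. Third, I would note that this indeed lands in a CAT$(0)$ (in fact Euclidean) space $Y = \mathbb{E}^k$, so the map witnesses non-affine-rigidity, and for the theorem one only needs ``not a dilation'', which holds as soon as $\mathbb{E}^m$ contains two geodesics on which $\|L\|$ differs — automatic for $m \geq 2$ once $L$ is not a scalar multiple of a linear isometry.

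\textbf{Main obstacle.} The delicate point is the compatibility condition: choosing $L$ so that $L \circ r$ is honestly affine on \emph{all} geodesics of $X$, not just those lying in $A$. This requires understanding exactly how the retraction bends a geodesic as it crosses from one apartment to another — the ``folding'' happens along walls, and the change of direction is governed by a reflection in $W_0$ (or more precisely by the gallery-distance combinatorics of the building). One must identify the subspace $U \subseteq \mathbb{E}^m$ spanned by all the root directions that can occur as folding directions for geodesics through the chosen chamber/boundary point, and take $L$ to be (a nonzero non-isometric linear map on) the orthogonal complement $U^\perp$, composed with orthogonal projection $\mathbb{E}^m \to U^\perp$; then folding is invisible to $L$ and affineness is preserved. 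Verifying that $U^\perp$ is large enough to still admit a non-similarity linear map — equivalently, that $U \neq \mathbb{E}^m$ or that $\dim U^\perp \geq 2$, or else that $L$ can be a non-similarity even on a line by being the zero map on part and nonzero on another, which still gives a non-dilation as long as some geodesic has factor $0$ and another has factor $>0$ — is where the rank $\geq 2$ hypothesis enters and where I would be most careful. I expect the cleanest route is: pick $\xi \in \partial X$ a regular point, let $H_\xi$ be the union of the walls through a chamber representing $\xi$, observe the retraction $r_\xi$ folds only along these, deduce the folding directions span a proper subspace (because $\xi$ regular means the chamber is ``generic''), and project onto a complementary direction; then $f = (\text{coordinate of }r_\xi \text{ along that direction})$ is real-valued, affine, non-constant, and visibly not a dilation since some geodesics are parallel to $\xi^\perp$-hyperplanes (factor $0$) and others are not. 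This also dovetails with Innami-type results quoted in the introduction, giving a reassuring consistency check.
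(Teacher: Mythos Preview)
Your approach has a genuine gap, and in fact your own ``consistency check'' is a contradiction rather than a confirmation. You end up proposing a non-constant real-valued affine map $f\colon X \to \mathbb{R}$ on an irreducible Euclidean building of rank $\geq 2$. But the Innami/Alexander--Bishop result you cite says precisely that such a map forces $X \cong N \times \mathbb{R}$; an irreducible building of higher rank does not split off a line, so no such $f$ can exist. The same obstruction kills the more general version with target $\mathbb{E}^k$: by Bennett--Mooney--Spatzier (quoted in the introduction), any affine map from an irreducible Hadamard space with a geometric group action into a CAT$(0)$ target is a dilation, so you cannot witness non-rigidity with a Euclidean target at all. Concretely, the technical step that fails is your claim that the folding directions of the sector retraction span a proper subspace: when $W_0$ is irreducible the roots span $\mathbb{E}^m$, and a retraction based at a chamber at infinity can fold along every root type, so the only linear $L$ with $L = L\circ s_\alpha$ for all relevant reflections is $L=0$.

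The paper's proof takes an entirely different route and sidesteps this obstruction by \emph{not} mapping into a CAT$(0)$ space. One chooses a $W_0$-invariant non-Euclidean norm $\|\cdot\|_B$ on the model apartment $\mathbb{E}^m$ (e.g.\ the norm whose unit ball is the $W_0$-orbit of a closed alcove); by Bennett--Schwer this induces a genuine metric $d_B$ on all of $X$ via the charts, and the identity $\mathrm{id}\colon (X,d)\to (X,d_B)$ sends geodesics to geodesics (since any geodesic lies in an apartment, and straight lines are geodesics for every norm) with reparametrization factor $\|v\|_B/\|v\|_2$ depending on the direction $v$. This is affine and not a dilation precisely because $\|\cdot\|_B$ is not a multiple of the Euclidean norm, which uses rank $\geq 2$. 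The target $(X,d_B)$ is not CAT$(0)$, which is exactly why the Bennett--Mooney--Spatzier obstruction does not apply.
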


Our proof depends upon insights by Bennett--Schwer \cite{P}, on modifying the model geometry of the apartments of a Euclidean building.

\begin{proof}[Proof of Lemma~\ref{buildings}]
Let $(X,d)$ be a Euclidean building of rank at least two and let $W = \R^n \rtimes W_0$ be the affine Weyl group associated with $X$, where $W_0$ is a spherical Coxeter group. By Definition~\ref{buildings}, there exists a collection $\mathcal A$ of charts $\varphi\colon  (\R^n,\|\cdot\|_2) \to (X,d)$ satisfying assumptions (A1)-(A5). 

Now the idea is to equip $\R^n$ with a non-Euclidean norm $\|\cdot\|_B$ which is invariant under the action of $W_0$. Indeed in that case, we observe that by the definition of Euclidean buildings, if we take any two points $x,y\in X$ and some apartment $\varphi(\R^n)$, $\varphi \in \mathcal A$ containing $x$ and $y$, the expression $\|\varphi_A^{-1}(x) - \varphi_A^{-1}(y)\|_B$ is independent of the particular choice of $\varphi$. Thus we obtain the well-defined distance function $$d_B(x, y) := \|\varphi_A^{-1}(x) - \varphi_A^{-1}(y)\|_B,$$ for any $\varphi \in \mathcal A$ with $x,y\in \varphi(\R^n)$. By Bennett and Schwer \cite[Theorem 3.5]{P}, this distance function satisfies the triangle inequality and is thus a metric. 
 
Since any geodesic $\gamma\colon  I\to (X,d)$ lies in an apartment $\varphi(\R^n)$, $\varphi\in \mathcal A$, we can translate this back to a geodesic $\varphi_A^{-1}(\gamma)\colon I \to (\R^n,\|\cdot\|_2)$. This map is still a geodesic in $\R^n$ after changing to the norm $\|\cdot\|_B$. Hence, by the definition of the metric $d^\prime$, $\gamma\colon I \to (X,d^\prime)$ is also a geodesic. 

Therefore the identity $\text{id}\colon (X,d) \to (X,d^\prime)$ is an affine map. Finally, since the norm $\|\cdot\|_B$ is non-Euclidean, this affine map is not a dilation. 

Thus it only remains to construct such a non-Euclidean norm invariant under the action of $W_0$. To that end, simply choose a special \textit{$W_0$-chamber} or \textit{alcove} $C$ (see \cite[Remarks 10.33 (d)]{MR2439729}) adjacent to the origin and observe that the orbit $B = \bigcup_{w\in W_0}wC$ is a symmetric convex body invariant under $W_0$. Since $n \geq 2$, $B$ is not a Euclidean ball. Thus, $B$ induces a non-Euclidean norm $\|\cdot\|_B$ on $\mathbb{R}^n$, which is invariant under $W_0$.
\end{proof}
\begin{figure}[h]
	\begin{tikzpicture}[scale=1,
	extended line/.style={shorten >=-#1,shorten <=-#1}, extended line/.default=35cm,
	every node/.style={inner sep=1.8pt, circle, draw}]
	
	\clip (-2.5, -1.75) rectangle (2.5, 1.75);
	
	\fill[black!20] ($(0, 0)$) -- ($(0, -1)$) -- ($({0.5*sqrt(3)}, -0.5)$);
	\fill[black!20] ($(0, 0)$) -- ($(0, 1)$) -- ($({0.5*sqrt(3)}, 0.5)$);
	\fill[blue!60] ($(0, 0)$) -- ($({0.5*sqrt(3)}, 0.5)$) -- ($({0.5*sqrt(3)}, -0.5)$);
	\fill[black!20] ($(0, 0)$) -- ($(-{0.5*sqrt(3)}, 0.5)$) -- ($(-{0.5*sqrt(3)}, -0.5)$);
	\fill[black!20] ($(0, 0)$) -- ($(0, 1)$) -- ($(-{0.5*sqrt(3)}, 0.5)$);
	\fill[black!20] ($(0, 0)$) -- ($(0, -1)$) -- ($(-{0.5*sqrt(3)}, -0.5)$);

	\begin{scope}[black!80]
	\newcommand{\rows}{5}
	\foreach \row in {-\rows, ...,\rows} {
		\draw [extended line] ($\row*({0.5*sqrt(3)}, 0.5)$) -- ($(0,\rows)+\row*({0.5*sqrt(3)}, -0.5)$);
		\draw [extended line] ($\row*(0, 1)$) -- ($({\rows/2*sqrt(3)}, \rows/2)+\row*({-0.5*sqrt(3)}, 0.5)$);
		\draw [extended line] ($\row*(0, 1)$) -- ($({-\rows/2*sqrt(3)}, \rows/2)+\row*({0.5*sqrt(3)}, 0.5)$);
	}
	\end{scope}
	\end{tikzpicture}
	\caption{Tessellation of $\mathbb E^n$ induced by $W$ with $B$ coloured and the special alcove $C$ highlighted in blue.}
\end{figure}

\section{Proof of the main result}

\begin{proof}[Proof of Theorem~\ref{main}]

By Lemma~\ref{products} for direct products, \cite[Theorem 1.1]{affineimages} for Riemannian symmetric spaces and Lemma~\ref{buildings} for Euclidean buildings, if $X$ splits as a non-trivial direct product, is a Riemannian symmetric space of rank at least two or a Euclidean building of rank at least two, $X$ is not affinely trivial. This establishes the first direction of Theorem~\ref{main}.

For the other direction, assume that $X$ is not affinely rigid, i.e. there exists an affine map $f\colon  X \to Y$ which is not a dilation.  

First, note that by Proposition~\ref{affine_contin}, the affine map $f$ is Lipschitz continuous with Lipschitz constant $\|f\|>0$. 
We begin by showing that if $\gamma_1,\gamma_2\colon [0,\infty) \to X$ are asymptotic rays, meaning $\sup_{s\geq 0} d_X(\gamma_1(s),\gamma_2(s)) < \infty$, then $f\circ\gamma_1$ and $f\circ\gamma_2$ are reparametrized under $f$ by the same factor. Indeed, since $f$ is affine, $f\circ\gamma_i$ are linearly reparametrized geodesics with factors $\rho_i$, $i=1,2$ respectively. Suppose now $\rho_1 \neq \rho_2$. We can assume that $\rho_1<\rho_2$. Now for any $t \geq 0$, using both the triangle inequality and the Lipschitz property of $f$, we have
\begin{align*}
(\rho_2-\rho_1) \cdot t &\leq d_Y(f(\gamma_1(0)),f(\gamma_2(0))) + d_Y(f(\gamma_1(t)),f(\gamma_2(t)))\\
&\leq 2\|f\| \sup_{s\geq 0} d_X(\gamma_1(s),\gamma_2(s)) < \infty.
\end{align*}
Letting $t \to \infty$ yields a contradiction, so $\rho_1 = \rho_2$.

For $x\in X$ and $\xi\in \partial X$, let $\gamma_{x,\xi}\colon [0,\infty) \to X$ be the unique ray from $x$ to $\xi$. Define $\varphi_x(\xi) = d_Y(f(\gamma_{x,\xi}(1)),f(x))$. By the previous observation, $\varphi_x$ is independent of the choice of $x$, and so we set $\varphi := \varphi_x$. 

The map $\varphi\colon \partial X \to \mathbb{R}_{\geq 0}$ is continuous with respect to the cone topology. Indeed, fixing some base point $x\in X$, for every $\xi\in \partial X$ and $\epsilon>0$, we know that $U(\xi,1,\frac{\epsilon}{\|f\|})\subset \varphi^{-1}(B_\epsilon(\varphi(\xi)))$, where $B_\epsilon(\varphi(\xi))$ is the open ball of radius $\epsilon$ centred at $\varphi(\xi)$. Thus $\varphi$ is continuous.

Let $\xi\in \partial X$. By the above, we know that $\varphi$ is constant on the set $\text{Ant}_{vis}(\xi)$ of points visually antipodal to $\xi$. By induction, $\varphi$ is also constant on $\text{Ant}_{vis}^j(\xi)$ for all $j\in \N$, and therefore on the minimal visual symmetric subset $\text{A}_{vis}(\xi)$. Furthermore, by continuity, $\varphi$ is constant on $B_\xi:=\overline{\text{A}_{vis}(\xi)}$. By Lemma~\ref{St2} (cf. \cite[Corollaries 3.5 and 3.6]{MR4701879}), the closure $B_\xi$ is symmetric. 

Now suppose, $B_\xi=\partial X$. Then $\varphi$ is constant on the entire boundary, which contradicts the fact that the affine map $f$ is not a dilation. Indeed take two geodesics $\eta_1\colon I_1 \to X$ and $\eta_2\colon I_2 \to X$ with intervals $I_1,I_2$. By geodesic completeness, they can be extended to geodesic lines respectively. If $\varphi$ was constant, these lines, and therefore also $\eta_1$ and $\eta_2$, would both be reparametrized under $f$ by the same factor and hence $f$ would be a dilation. 

Thus $B_\xi\subset \partial X$ is a non-trivial closed symmetric subset. By Stadler's Theorem~\ref{St} (cf. \cite[Theorem A]{Stadler}), this finally establishes that $X$ is a Riemannian symmetric space of higher rank, a Euclidean building of higher rank or spilts non-trivially as a metric product.  
\end{proof}

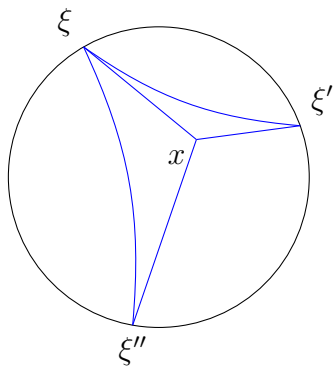
\begin{figure}[h]
 \begin{center}
  \begin{tikzpicture}
   \draw (0, 0) circle (2);

   \coordinate (X) at (0.5, 0.5) {};
   \coordinate (xi0) at (120: 2) {};
   \coordinate (xi1) at (20: 2) {};
   \coordinate (xi2) at (260: 2) {};

   \node[below left] at (X) {$x$};
   \node[above left] at (xi0) {$\xi$};
   \node[above right] at (xi1) {$\xi'$};
   \node[below] at (xi2) {$\xi''$};

   \draw[blue] (X) -- (xi0);
   \draw[blue] (X) -- (xi1);
   \draw[blue] (X) -- (xi2);

   \draw[blue] (xi0) to[bend right=15] (xi1);
   \draw[blue] (xi2) to[bend right=15] (xi0);

  \end{tikzpicture}
  \caption{The map $\varphi:\partial X \to \R_{\geq 0}$ is constant on the set $\text{Ant}_{vis}(\xi)$ of points visually antipodal to $\xi$.}

 \end{center}
 \end{figure}

\section{Necessity of assumptions and open questions}\label{ness}
We briefly explore the necessity of the assumptions in Theorem~\ref{main}.
By Theorem~\ref{main}, a \textit{geodesically complete, locally compact CAT$(0)$ space with a geometric group action} admits an affine map which is not a dilation if and only if it is isometric to one of the three \textit{higher rank model geometries}: non-trivial metric products, Riemannian symmetric spaces of higher rank, and Euclidean buildings of higher rank. 

Our argument directly builds upon, and is a consequence of, Stadler's recent characterization (cf. \cite[Theorem A]{Stadler}) of these higher rank model geometries in terms of symmetric sets at infinity. The assumptions --- geodesic completeness, local compactness, and that the space admits a geometric group action --- stem directly from Stadler's result.

However, concerning the geometric group action, we are not aware of any counterexamples if we drop this assumption, indeed it seems reasonable to conjecture that the result still holds under these weaker assumptions and it is an interesting question to investigate this further. 

Regarding dropping the local compactness, notice that our definition of Euclidean buildings already allows for, and Lemma~\ref{buildings} already holds for Euclidean buildings that are not locally compact. In contrast, classical Riemannian symmetric spaces are, by definition, locally compact, and we would likely need to generalize to infinite dimensional Riemannian symmetric spaces. For more on this, see for example \cite{MR3449152}.

Finally, in contrast to the other assumptions, we record that geodesic completeness is clearly necessary. Take a non-trivial metric product of Hadamard spaces $X\times Y$, or a Riemannian symmetric space $M$ of higher rank (and non-compact type). As we already noted in the proof of Theorem~\ref{main}, both of these types of spaces admit affine maps that are not dilations. A closed ball in any of these spaces is convex and CAT$(0)$; and its compactness ensures that it trivially admits a geometric group action. An affine map that is not a dilation on the entire space remains a non-dilation when restricted to a closed ball. However, balls are clearly not isometric to non-trivial products, Riemannian symmetric spaces, or Euclidean buildings. 

Indeed, it might be hoped that non-dilation affine maps on any CAT$(0)$ space arise precisely as above: as non-dilation affine maps on one of the three higher rank model geometries, restricted to convex subsets. While this holds in the special case of real valued affine maps as shown in \cite{MR2262730}, the general case currently appears insurmountable and new ideas are likely needed.

\section{Acknowledgments} Foremost, I would like to thank Alexander Lytchak for helpful comments and suggestions. I also thank Stephan Stadler for comments and answering questions on his work, Julia Heller and Sven-Ole Behrend for discussions on buildings, and Jo\~{a}o Lobo Fernandes for helpful comments. Finally, I thank Johannes Gigla for assistance in creating the graphics. This
research was partially supported by the Deutsche Forschungsgemeinschaft
(DFG, German Research Foundation) under project number 281869850.

\bibliographystyle{alpha}
\bibliography{paper}

\begin{thebibliography}{BMS16}

\bibitem[AB98]{MR1645958}
Scot Adams and Werner Ballmann.
\newblock Amenable isometry groups of {H}adamard spaces.
\newblock {\em Math. Ann.}, 312(1):183--195, 1998.

\bibitem[AB05]{MR2224586}
Stephanie~B. Alexander and Richard~L. Bishop.
\newblock A cone splitting theorem for {A}lexandrov spaces.
\newblock {\em Pacific J. Math.}, 218(1):1--15, 2005.

\bibitem[AB08]{MR2439729}
Peter Abramenko and Kenneth~S. Brown.
\newblock {\em Buildings}, volume 248 of {\em Graduate Texts in Mathematics}.
\newblock Springer, New York, 2008.
\newblock Theory and applications.

\bibitem[BH99]{BH}
M.~Bridson and A.~Haefliger.
\newblock {\em Metric Spaces of Non-Positive Curvature}, volume \textbf{319} of {\em Grundlehren der mathematischen Wissenschaften}.
\newblock Springer-Verlag Berlin Heidelberg, 1999.

\bibitem[BMS16]{MR3451453}
Hanna Bennett, Christopher Mooney, and Ralf Spatzier.
\newblock Affine maps between {$\rm CAT(0)$} spaces.
\newblock {\em Geom. Dedicata}, 180:1--16, 2016.

\bibitem[BS14]{P}
Curtis~D. Bennett and Petra~N. Schwer.
\newblock On axiomatic definitions of non-discrete affine buildings.
\newblock {\em Adv. Geom.}, 14(3):381--412, 2014.
\newblock With an appendix by Koen Struyve.

\bibitem[CM09]{MR2574740}
Pierre-Emmanuel Caprace and Nicolas Monod.
\newblock Isometry groups of non-positively curved spaces: structure theory.
\newblock {\em J. Topol.}, 2(4):661--700, 2009.

\bibitem[Duc15]{MR3449152}
Bruno Duchesne.
\newblock Infinite dimensional {R}iemannian symmetric spaces with fixed-sign curvature operator.
\newblock {\em Ann. Inst. Fourier (Grenoble)}, 65(1):211--244, 2015.

\bibitem[FL08]{MR2399098}
Thomas Foertsch and Alexander Lytchak.
\newblock The de {R}ham decomposition theorem for metric spaces.
\newblock {\em Geom. Funct. Anal.}, 18(1):120--143, 2008.

\bibitem[HL07]{HL}
P.~Hitzelberger and A.~Lytchak.
\newblock Spaces with many affine functions.
\newblock {\em Proc. Amer. Math. Soc.}, \textbf{135}(7):2263--2271, 2007.

\bibitem[Inn82]{MR681608}
Nobuhiro Innami.
\newblock Splitting theorems of {R}iemannian manifolds.
\newblock {\em Compositio Math.}, 47(3):237--247, 1982.

\bibitem[KL97]{MR1608566}
Bruce Kleiner and Bernhard Leeb.
\newblock Rigidity of quasi-isometries for symmetric spaces and {E}uclidean buildings.
\newblock {\em Inst. Hautes \'Etudes Sci. Publ. Math.}, (86):115--197, 1997.

\bibitem[Kra12]{kramer}
Linus Kramer.
\newblock Metric properties of {E}uclidean buildings.
\newblock In {\em Global {D}ifferential {G}eometry}, volume~17 of {\em Springer Proc. Math.}, pages 147--159. Springer, Heidelberg, 2012.

\bibitem[Len25]{isom}
David Lenze.
\newblock Isometric rigidity of {$L^2$}-spaces with manifold targets.
\newblock {\em Trans. Amer. Math. Soc.}, 2025.
\newblock to appear in print; electronically published on July 8, 2025; arXiv:2412.13914.

\bibitem[LN19]{LN}
Alexander Lytchak and Koichi Nagano.
\newblock Geodesically complete spaces with an upper curvature bound.
\newblock {\em Geom. Funct. Anal.}, 29(1):295--342, 2019.

\bibitem[LS07]{MR2262730}
Alexander Lytchak and Viktor Schroeder.
\newblock Affine functions on {${\rm CAT}(\kappa)$}-spaces.
\newblock {\em Math. Z.}, 255(2):231--244, 2007.

\bibitem[LS18]{MR3803798}
Christian Lange and Stephan Stadler.
\newblock Affine functions on {A}lexandrov spaces.
\newblock {\em Math. Z.}, 289(1-2):455--469, 2018.

\bibitem[Lyt12]{affineimages}
Alexander Lytchak.
\newblock Affine images of {R}iemannian manifolds.
\newblock {\em Math. Z.}, 270(3-4):809--817, 2012.

\bibitem[Oht03]{MR1981876}
Shin-ichi Ohta.
\newblock Totally geodesic maps into metric spaces.
\newblock {\em Math. Z.}, 244(1):47--65, 2003.

\bibitem[Par00]{MR1796138}
Anne Parreau.
\newblock Immeubles affines: construction par les normes et \'etude des isom\'etries.
\newblock In {\em Crystallographic groups and their generalizations ({K}ortrijk, 1999)}, volume 262 of {\em Contemp. Math.}, pages 263--302. Amer. Math. Soc., Providence, RI, 2000.

\bibitem[Sta24a]{MR4701879}
Stephan Stadler.
\newblock C{AT}(0) spaces of higher rank {II}.
\newblock {\em Invent. Math.}, 235(3):709--743, 2024.

\bibitem[Sta24b]{Stadler}
Stephan Stadler.
\newblock Rigidity of ideal symmetric sets.
\newblock {\em \href{https://arxiv.org/abs/2405.16238}{arXiv:2405.16238}}, 2024.

\bibitem[Tit86]{MR843391}
Jacques Tits.
\newblock Immeubles de type affine.
\newblock In {\em Buildings and the geometry of diagrams ({C}omo, 1984)}, volume 1181 of {\em Lecture Notes in Math.}, pages 159--190. Springer, Berlin, 1986.

\bibitem[Vil70]{MR262984}
Jaak Vilms.
\newblock Totally geodesic maps.
\newblock {\em J. Differential Geometry}, 4:73--79, 1970.

\end{thebibliography}

\end{document}